\documentclass[10pt]{article}
 \usepackage{amssymb}
 \usepackage{amsmath}
 \usepackage{srcltx}
\usepackage{amsfonts}
\usepackage{euscript}
\usepackage{epsfig}
\usepackage{amsthm}

\newtheorem*{thm}{Theorem}
\newtheorem*{cor1}{Corollary 1}
\newtheorem*{cor2}{Corollary 2}
\newtheorem*{cor3}{Corollary 3}
\newtheorem*{cor4}{Corollary 4}
\newtheorem*{cor5}{Corollary 5}
\newtheorem*{sublemma}{Sublemma}

\theoremstyle{definition}

\newtheorem*{eg}{Example}

\theoremstyle{remark}

\numberwithin{equation}{section}
\begin{document}

\title{\large{\textbf{On a class of hereditary crossed-product orders\footnote{\textbf{Accepted for publication by the Proceedings of the American Mathematical Society.}}} } }

\author{\bf \sf John S. Kauta}

\date{}

 \maketitle

\begin{abstract}
In this brief note, we revisit a class of crossed-product orders
over discrete valuation rings introduced by D. E. Haile. We give simple but
useful criteria, which involve only the two-cocycle associated with
a given crossed-product order, for determining whether such an order
is a hereditary order or a maximal order.

2010 \textit{Mathematics Subject Classification. Primary:} 13F30 \and 16E60 \and 16H10 \and 16S35.
\end{abstract}

If $R$ is a ring, then $J(R)$ will denote its Jacobson radical,
$U(R)$ its group of multiplicative units, and $R^{\#}$ the subset of
all the non-zero elements. The terminology used in this paper, if
not in \cite{H}, can be found in \cite{R}. The book by Reiner
\cite{R} is also an excellent source of literature on maximal orders
and hereditary orders.

Let $V$ be a discrete valuation ring (DVR), with quotient field $F$,
and let $K/F$ be a finite Galois extension, with group $G$, and let
$S$ be the integral closure of $V$ in $K$. Let $f\in Z^2(G,U(K))$ be
a normalized two-cocycle. If $f(G\times G)\subseteq S^{\#}$, then
one can construct a ``crossed-product'' $V$-algebra
$$A_f=\sum_{\sigma\in G} Sx_{\sigma},$$ with the usual rules of multiplication
($x_{\sigma}s=\sigma(s)x_{\sigma}$ for all $s\in S,\sigma\in G$ and
$x_{\sigma}x_{\tau}=f(\sigma,\tau)x_{\sigma\tau})$. Then $A_f$ is
associative, with identity $1=x_1$, and center $V=Vx_1$. Further,
$A_f$ is a $V$-order in the crossed-product $F$-algebra
$\Sigma_f=\sum_{\sigma\in G} Kx_{\sigma}=(K/F,G,f)$.

Two such cocycles $f$ and $g$ are said to be cohomologous over $S$
(respectively cohomologous over $K$), denoted by $f\! \sim_S\! g$
(respectively $f\! \sim_K\! g$), if there are elements
$\{c_{\sigma}\mid \sigma\in G\}\subseteq U(S)$ (respectively
$\{c_{\sigma}\mid \sigma\in G\}\subseteq K^{\#}$) such that
$g(\sigma,\tau)=c_{\sigma}\sigma{(c_{\tau})}c_{\sigma\tau}^{-1}f(\sigma,\tau)$
for all $\sigma,\tau \in G$. Following \cite{H}, let $H=\{\sigma\in
G\mid f(\sigma,\sigma^{-1})\in U(S)\}$. Then $H$ is a subgroup of
$G$. On $G/H$, the left coset space of $G$ by $H$, one can define a
partial ordering by the rule $\sigma H \leq \tau H
\;\;\textrm{if}\;\; f(\sigma,\sigma^{-1}\tau)\in U(S).$ Then ``$\leq
$'' is well-defined and depends only on the cohomology class of $f$
over $S$. Further, $H$ is the unique least element. We call this
partial ordering on $G/H$ \textit{the graph of f.}

Such a setup was first formulated by Haile in \cite{H}, with the
assumption that $S$ is unramified over $V$, wherein, among other
things, conditions equivalent to such orders being maximal orders
were considered. This is the class of crossed-product orders we
shall study in this paper, \textit{always assuming that $S$ is
unramified over $V$}. We emphasize the fact that, since we do not
require that $f(G\times G)\subseteq U(S)$, this theory constitutes a
drastic departure from the classical theory of crossed-product
orders over DVRs, such as can be found in \cite{Hd}.

Let us now fix additional notation to be used in the rest of the
paper, most of it borrowed from \cite{H} as before. If $M$ is a
maximal ideal of $S$, let $D_M$ be the decomposition group of $M$,
let $K_M$ be the decomposition field, and let $S_M$ be the
localization of $S$ at $M$. The two-cocycle $f:G\times G\mapsto
S^{\#}$ yields a two-cocycle $f_M:D_M\times D_M\mapsto S^{\#}_M$,
determined by the restriction of $f$ to $D_M\times D_M$ and the
inclusion of $S^{\#}$ in $S_M^{\#}$. Then $A_{f_M}=\sum_{\sigma\in
D_M}S_Mx_{\sigma}$ is a crossed-product order in
$\Sigma_{f_M}=\sum_{\sigma\in D_M}Kx_{\sigma}=(K/K_M,D_M,f_M)$. In
addition, we can obtain a \textit{twist} of $f$, described in
\cite[pp. 137-138]{H} and denoted by $\tilde{f}$, which depends on
the choice of a maximal ideal $M$ of $S$, and the choice of a set of
coset representatives of $D_M$ in $G$. We also define $F:G\times
G\mapsto S^{\#}$ by $F(\sigma,\tau)=f(\sigma,\sigma^{-1}\tau)$ for
$\sigma,\tau\in G$. While $\tilde{f}$ is a two-cocycle, $F$ is not.

If $B$ is a $V$-order of $\Sigma_f$ containing $A_f$, then by
\cite[Proposition 1.3]{H}, $B=A_g=\sum_{\sigma\in G}Sy_{\sigma}$ for
some two-cocycle $g:G\times G\mapsto S^{\#}$, with $g\!\sim_K\!f$.
Moreover, the proof of \cite[Proposition 1.3]{H} shows that
$y_{\sigma}=k_{\sigma}x_{\sigma}$ for some $k_{\sigma}\in K^{\#}$,
with $k_1=1$, whence $g$ is also a normalized two-cocycle.

We begin with a technical result.

 \begin{sublemma}
Let $\tau\in G$. If
$\displaystyle{I_{\tau}=\hspace*{-1.5em}\prod_{f(\tau,\tau^{-1})\not\in
M}\hspace*{-1.5em}M}$, where $M$ denotes a maximal ideal of $S$,
then $I_{\tau}^{\tau^{-1}}=I_{\tau^{-1}}$.
\end{sublemma}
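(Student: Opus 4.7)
The plan is to translate the ideal identity into a statement about individual maximal ideals of $S$ under the Galois action, and then invoke the two-cocycle identity. The notation $I_{\tau}^{\tau^{-1}}$ refers to the image of the ideal $I_{\tau}$ under the ring automorphism $\tau^{-1}$ of $S$. Since $\tau^{-1}$ permutes the maximal ideals of $S$, applying it to a product of maximal ideals gives the product of their images, so
$$I_{\tau}^{\tau^{-1}}=\prod_{f(\tau,\tau^{-1})\not\in M}\tau^{-1}(M),$$
where $M$ ranges over the maximal ideals of $S$ with $f(\tau,\tau^{-1})\not\in M$.

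Next I would reparametrize by setting $N=\tau^{-1}(M)$, so that $M=\tau(N)$ and $N$ now ranges over all maximal ideals of $S$. Because $\tau^{-1}$ is a ring automorphism, the condition $f(\tau,\tau^{-1})\not\in M=\tau(N)$ is equivalent to $\tau^{-1}\bigl(f(\tau,\tau^{-1})\bigr)\not\in N$. The problem therefore reduces to the identity
$$\tau^{-1}\bigl(f(\tau,\tau^{-1})\bigr)=f(\tau^{-1},\tau),$$
since then the product on the right becomes $\prod_{f(\tau^{-1},\tau)\not\in N}N=I_{\tau^{-1}}$.

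The main step is verifying this last identity, for which I would invoke the two-cocycle condition
$$\sigma\bigl(f(\tau,\rho)\bigr)\,f(\sigma,\tau\rho)=f(\sigma\tau,\rho)\,f(\sigma,\tau).$$
Specializing to $\sigma=\tau^{-1}$ and $\rho=\tau^{-1}$ gives
$$\tau^{-1}\bigl(f(\tau,\tau^{-1})\bigr)\,f(\tau^{-1},1)=f(1,\tau^{-1})\,f(\tau^{-1},\tau),$$
and the normalization $f(\tau^{-1},1)=f(1,\tau^{-1})=1$ collapses this to the desired equality. Combining the three steps yields $I_{\tau}^{\tau^{-1}}=I_{\tau^{-1}}$.

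There is no serious obstacle in this argument; the only thing to be careful about is that $\tau^{-1}$ indeed sends maximal ideals to maximal ideals bijectively (which is automatic since $S/V$ is an integral extension with $G$ acting by $V$-algebra automorphisms), and that the normalization of $f$ is used to eliminate the two extra cocycle factors. The entire proof is essentially a one-line computation once the Galois-action bookkeeping is set up correctly.
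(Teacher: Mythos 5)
Your proof is correct and follows essentially the same route as the paper's: apply $\tau^{-1}$ to the product, re-index the maximal ideals, and use the normalized-cocycle identity $\tau^{-1}\bigl(f(\tau,\tau^{-1})\bigr)=f(\tau^{-1},\tau)$. The paper merely compresses this into one displayed chain of equalities, leaving the cocycle computation implicit, whereas you spell it out.
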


\begin{proof} We have
$$I_{\tau}^{\tau^{-1}}=\hspace*{-1.5em}\prod_{f(\tau,\tau^{-1})\not\in
M}\hspace*{-1.6em}M^{\tau^{-1}}=\hspace*{-1.5em}\prod_{f^{\tau^{-1}}(\tau,\tau^{-1})\not\in
M^{\tau^{-1}}}\hspace*{-3em}M^{\tau^{-1}}=\hspace*{-1em}\prod_{\hspace*{1em}f(\tau^{-1},\tau)\not\in
M^{\tau^{-1}}}\hspace*{-2.8em}M^{\tau^{-1}}=I_{\tau^{-1}}.$$
\end{proof}

\begin{thm} The crossed-product order $A_f$ is hereditary if and only if
$f(\tau,\tau^{-1})\not\in M^2$ for all $\tau\in G$ and every
maximal ideal $M$ of $S$.\end{thm}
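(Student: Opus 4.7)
The plan is to exploit the local character of both sides of the equivalence. Since $V$ is a DVR, heredity of $A_f$ is preserved and reflected by $\pi V$-adic completion, and the condition $f(\tau,\tau^{-1}) \notin M^2$ is intrinsic to $S$. Combined with the $f\leadsto f_M$ reduction reviewed above, together with the Sublemma (which shows that the condition on $f$ is $G$-equivariant and hence depends only on the $G$-orbits of maximal ideals), one should be able to reduce to the case where $V$ is complete and $S$ is a complete DVR with unique maximal ideal $M=\pi S$.

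For ($\Rightarrow$), I would argue contrapositively. Suppose $f(\tau_0,\tau_0^{-1})=\pi^2 u$ with $u \in U(S)$ for some $\tau_0 \in G$. Then $y_{\tau_0}:=\pi^{-1}x_{\tau_0}$ and $y_{\tau_0^{-1}}:=\pi^{-1}x_{\tau_0^{-1}}$ lie in $\Sigma_f$ and satisfy $y_{\tau_0}y_{\tau_0^{-1}}=u\in U(S)$. Using \cite[Proposition~1.3]{H} in the form recalled just before the Sublemma, I would assemble a proper $V$-overorder $A_g$ of $A_f$ with $g\sim_K f$ and $g(\tau_0,\tau_0^{-1})\in U(S)$. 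A standard fact about $V$-orders over a DVR asserts that $A_f$ is hereditary if and only if $A_f$ coincides with the left order of $J(A_f)$; I would then verify that $\pi^{-1}x_{\tau_0}$ lies in that left order by using $f(\tau_0,\tau_0^{-1})\in M^2$ to show $x_{\tau_0}\cdot J(A_f)\subseteq \pi J(A_f)$, producing the required contradiction.

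For ($\Leftarrow$), assume $f(\tau,\tau^{-1})\notin M^2$ for every $\tau$. A direct computation should identify $J(A_f)=\sum_{\sigma\in H}Mx_\sigma \oplus \sum_{\sigma\notin H}Sx_\sigma$. The strategy is to show that $J(A_f)$ is an invertible two-sided fractional $A_f$-ideal in $\Sigma_f$, which over a DVR is the standard criterion for heredity. One constructs an explicit two-sided inverse whose generators are, roughly speaking, $\pi^{-1}f(\sigma,\sigma^{-1})^{-1}x_{\sigma^{-1}}$ for $\sigma\notin H$; these pair with the given generators of $J(A_f)$ to produce units in $S$ precisely because $v_M(f(\sigma,\sigma^{-1}))=1$ (rather than $\ge 2$) for such $\sigma$, and similarly for $\sigma^{-1}$ by the Sublemma.

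The main obstacle will be the cocycle book-keeping in both directions: the products $(s_\sigma x_\sigma)(s_\rho x_\rho)=s_\sigma\,\sigma(s_\rho)\,f(\sigma,\rho)\,x_{\sigma\rho}$ interact with the partition $G=H\sqcup(G\setminus H)$ in a non-trivial way via the two-cocycle identity. The Sublemma is the technical device that makes these manipulations symmetric in $\tau$ and $\tau^{-1}$, and in concert with the cocycle relation applied to triples such as $(\sigma,\sigma^{-1},\sigma)$ it should force the valuation hypothesis $v_M(f(\tau,\tau^{-1}))\le 1$ to control all the off-diagonal products that arise in the radical computation.
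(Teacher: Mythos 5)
Your opening reduction is where the argument breaks. Completing $V$ is harmless (heredity passes to and from the $J(V)$-adic completion), but after completion $S\otimes_V\hat V\cong\prod_M\hat S_M$ is a \emph{product} of complete DVRs permuted by $G$, not a single DVR; to get down to one complete DVR you must pass from $f$ to the local cocycles $f_M$ on the decomposition groups $D_M$, and that step discards exactly the values $f(\tau,\tau^{-1})$ with $\tau\notin D_M$ --- the values the theorem is about. Heredity of $A_f$ is \emph{not} detected by the orders $A_{f_M}$: the paper's closing example (take $f=f_2$ with $f_2(\sigma,\sigma)=(x^2+1)^2x$, $D_{M_1}=D_{M_2}=\{1\}$) has $A_{f_M}=S_M$ maximal for every maximal ideal $M$, yet $A_f$ is not hereditary. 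So the ``$f\leadsto f_M$ reduction'' cannot carry either direction of the equivalence, and since both of your arguments are run only in the case of a local $S$, the general theorem is not reached. The paper instead argues globally: the converse uses the radical formula $J(A_f)=\sum_\tau I_\tau x_\tau$ of \cite{H} with $I_\tau$ a product of maximal ideals, the Sublemma, and the criterion $O_l(J(A_f))=A_f$; the forward direction first treats maximal orders (via Haile's Theorem 2.3, Lemma 3.6, Theorem 3.16 and Corollary 3.11) and then handles a general hereditary $A_f$ by writing it as a finite intersection of maximal orders $A_{f_i}$ and comparing $J(A_g)\subseteq J(A_f)$ for a suitably chosen $A_g=A_{f_{i_0}}$.

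There is also a gap inside your local contrapositive step: $f(\tau_0,\tau_0^{-1})\in M^2$ does not imply $x_{\tau_0}J(A_f)\subseteq\pi J(A_f)$, i.e.\ $\pi^{-1}x_{\tau_0}$ need not lie in $O_l(J(A_f))$. For instance, let $G=\langle\sigma\rangle$ have order $3$, $S$ a DVR with uniformizer $\pi\in V$, and take the cocycle (a coboundary over $K$ with values in $S$) given by $f(\sigma,\sigma)=1$, $f(\sigma,\sigma^2)=f(\sigma^2,\sigma)=f(\sigma^2,\sigma^2)=\pi^3$. Then $f(\sigma,\sigma^{-1})=\pi^3\in M^2$, $H=\{1\}$ and $J(A_f)=M+Sx_\sigma+Sx_{\sigma^2}$, but $\pi^{-1}x_\sigma\cdot x_\sigma=\pi^{-1}x_{\sigma^2}\notin J(A_f)$, so $\pi^{-1}x_\sigma\notin O_l(J(A_f))$ even though $\tau_0=\sigma$ violates the square-free condition (here $\pi^{-1}x_{\sigma^2}$ does witness non-heredity, but your recipe does not produce it). The off-diagonal products $x_{\tau_0}\cdot Sx_\rho$ are simply not controlled by the hypothesis at $\tau_0$ alone, which is precisely why the paper's forward direction goes through the structure theory of maximal orders rather than an element-by-element idealizer computation. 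Your converse direction, by contrast, is essentially the paper's argument specialized to local $S$ (with the radical criterion stated as invertibility of $J(A_f)$ instead of $O_l(J(A_f))=A_f$), though your proposed inverse generators $\pi^{-1}f(\sigma,\sigma^{-1})^{-1}x_{\sigma^{-1}}$ are off by a factor --- when $v_M(f(\sigma,\sigma^{-1}))=1$ the inverse is $\pi^{-1}J(A_f)$ itself --- and, as above, it would still need the global version with the ideals $I_\tau$ and the Sublemma to prove the stated theorem.
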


\begin{proof} The theorem obviously holds if $H=G$, in which case
$A_f$ is an Azumaya algebra over $V$, so let us assume from now on
that $H\not= G$.

Suppose $A_f$ is hereditary. First,  assume $A_f$ is a maximal
order and $S$ is a DVR. Let $v$ be the valuation corresponding to $S$ with
value group $\mathbb{Z}$. Then
by \cite[Theorem~2.3]{H}, $H$ is a normal subgroup of $G$ and $G/H$ is
cyclic. Further, there exists $\sigma\in G$ such that
$v(f(\sigma,\sigma^{-1}))\leq 1$, $G/H=<\sigma H>$, and the graph of
$f$ is the chain $H\leq\sigma H\leq\sigma^2
H\leq\cdots\leq\sigma^{m-1} H$, where $m=\mid G/H\mid$. Choose $j$
maximal such that $1\leq j\leq m-1$ and
$v(f(\sigma^i,\sigma^{-i}))\leq 1\;\forall\; 1\leq i\leq j$. We
always have $\sigma H\leq \sigma^{-j} H$; but if $j<m-1$, then we
also have $\sigma^j H\leq \sigma^{j+1}H$. Hence if $j < m-1$, then, from the cocycle
identity
$f^{\sigma^j}(\sigma,\sigma^{-j}\sigma^{-1})f(\sigma^j,\sigma^{-j})=
f(\sigma^j,\sigma)f(\sigma^{j+1},\sigma^{-j}\sigma^{-1}),$ we
conclude that $v(f(\sigma^i,\sigma^{-i}))\leq 1\;\forall\; 1\leq
i\leq j+1$, a contradiction. So we must have $j=m-1$, so that
$v(f(\sigma^i,\sigma^{-i}))\leq 1\;\forall\; 1\leq i\leq m-1$. If
$\tau$ is an arbitrary element of $G$, then $\tau=\sigma^ih$ for
some $h\in H$ and some integer $i$, $0\leq i\leq m-1$. Therefore, by
\cite[Lemma 3.6]{H},
$v(f(\tau,\tau^{-1}))=v(F(\sigma^ih,1))=v(F(\sigma^i,1))=v(f(\sigma^i,\sigma^{-i}))\leq
1$; that is, $f(\tau,\tau^{-1})\not\in J(S)^2$.

We maintain the assumption that $A_f$ is a maximal order, but we now
drop the condition that $S$ is a DVR. By \cite[Theorem 3.16]{H},
there exists a twist of $f$, say $\tilde{f}$, such that
$f\!\sim_S\!\tilde{f}$. By \cite[Corollary 3.11]{H}, for any maximal
ideal $M$ of $S$, $A_{f_M}$ is a maximal order in $\Sigma_{f_M}$;
hence $f_M(\tau,\tau^{-1})\not\in M^2\;\forall\;\tau\in D_M$ by the
preceding paragraph. Therefore, from the manner in which $\tilde{f}$
is constructed from $f$, we infer that
$\tilde{f}(\tau,\tau^{-1})\not\in M^2\;\forall\;\tau\in G$ and any
maximal ideal $M$ of $S$, and thus $f(\tau,\tau^{-1})\not\in
M^2\;\forall\;\tau\in G$ and every maximal ideal $M$ of $S$, since
$f\!\sim_S\!\tilde{f}$.

If $A_f$ is not a maximal order, then it is the intersection of
finitely many maximal orders, say $A_{f_1},A_{f_2},\ldots, A_{f_l}$.
Note that
$$A_{f_i}=\sum_{\tau\in G}Sy^{(i)}_{\tau}=\sum_{\tau\in
G}Sk_{\tau}^{(i)}x_{\tau},$$ for some $k_{\tau}^{(i)}\in K$. Fix a
$\sigma\in G$, and a maximal ideal $N$ of $S$. Let $v_N$ be the
valuation corresponding to $N$, with value group $\mathbb{Z}$. Since
$$S=\bigcap_{i=1}^lSk_{\sigma}^{(i)},$$ there exists $i_0$
such that $v_N(k_{\sigma}^{(i_0)})=0$. Let $g=f_{i_0}$ and, for
$\tau \in G$, let $k_{\tau}=k_{\tau}^{(i_0)}$ and
$y_{\tau}=y_{\tau}^{(i_0)}$, so that $A_g=\sum_{\tau\in
G}Sk_{\tau}x_{\tau}=\sum_{\tau\in G}Sy_{\tau}$. By \cite[Proposition
3.1]{H}, $J(A_f)=\sum_{\tau\in G}I_{\tau}x_{\tau}$ and
$J(A_g)=\sum_{\tau\in G}J_{\tau}y_{\tau}$, where
$$I_{\tau}=\hspace*{-1.5em}\prod_{f(\tau,\tau^{-1})\not\in
M}\hspace*{-1.5em}M\;\;\;\;\;
\textrm{and}\;\;\;\;\;J_{\tau}=\hspace*{-1.5em}\prod_{g(\tau,\tau^{-1})\not\in
M}\hspace*{-1.5em}M,$$ and $M$ denotes a maximal ideal of $S$. Since
$A_f$ is a hereditary $V$-order in $\Sigma_f$ and $A_f\subseteq
A_g\subseteq\Sigma_f$, we have $J(A_g)\subseteq J(A_f)$, from which
we conclude that $J_{\sigma^{-1}}y_{\sigma^{-1}}\subseteq
I_{\sigma^{-1}}x_{\sigma^{-1}}$ and so
$J_{\sigma^{-1}}k_{\sigma^{-1}}\subseteq I_{\sigma^{-1}}.$ We have
$y_{\sigma^{-1}}J_{\sigma}y_{\sigma}=k_{\sigma^{-1}}x_{\sigma^{-1}}J_{\sigma}k_{\sigma}x_{\sigma}
=
J_{\sigma^{-1}}k_{\sigma^{-1}}x_{\sigma^{-1}}k_{\sigma}x_{\sigma}\subseteq
I_{\sigma^{-1}}x_{\sigma^{-1}}k_{\sigma}x_{\sigma}=\sigma^{-1}(k_{\sigma})I_{\sigma^{-1}}f(\sigma^{-1},\sigma)\\
=\left(k_{\sigma}I_{\sigma}f(\sigma,\sigma^{-1})\right)^{\sigma^{-1}}.$
On the other hand, $y_{\sigma^{-1}}J_{\sigma}y_{\sigma}=J^{\sigma^{-1}}_{\sigma}g(\sigma^{-1},\sigma)=\\
J_{\sigma^{-1}}g(\sigma^{-1},\sigma)$. Since $A_g$ is a maximal
order and therefore $g(\sigma^{-1},\sigma)\not\in M^2$ for every
maximal ideal $M$ of $S$, we see that
$J_{\sigma^{-1}}g(\sigma^{-1},\sigma)=J(V)S$ and so
$y_{\sigma^{-1}}J_{\sigma}y_{\sigma}=J(V)S$.
Therefore $J(V)S\subseteq
k_{\sigma}I_{\sigma}f(\sigma,\sigma^{-1})$. Since
$v_N(k_{\sigma})=0$, we conclude that $f(\sigma,\sigma^{-1})\not\in
N^2$, and so $f(\tau,\tau^{-1})\not\in M^2\;\forall\;\tau\in G$ and
any maximal ideal $M$ of $S$.

Conversely, suppose that $f(\tau,\tau^{-1})\not\in M^2$ for every
maximal ideal $M$ of $S$ and every $\tau\in G$. Let $B=O_l(J(A_f))$,
the left order of $J(A_f)$; that is, $B=\{x\in\Sigma_f\mid
xJ(A_f)\subseteq J(A_f)\}$. Since $\Sigma_f\supseteq B\supseteq
A_f$, $B=\sum_{\tau\in G}Sk_{\tau}x_{\tau}$, for some $k_{\tau}\in
K^{\#}$. For each $\tau\in G$, we have $S\subseteq Sk_{\tau}$, and
we will now show that $S=Sk_{\tau}$. As above, write $J(A_f)=\sum
I_{\tau}x_{\tau}$, with $I_{\tau}=\prod M$, where the product is
taken over all maximal ideals $M$ of $S$ for which
$f(\tau,\tau^{-1})\not\in M$. Observe that $J(V)S=I_1\supseteq
k_{\tau}x_{\tau}I_{\tau^{-1}}x_{\tau^{-1}}=k_{\tau}I^{\tau}_{\tau^{-1}}f(\tau,\tau^{-1})
=k_{\tau}I_{\tau}f(\tau,\tau^{-1}).$ Since $f(\tau,\tau^{-1})\not\in
M^2$ for every maximal ideal $M$ of $S$, we must have
$I_{\tau}f(\tau,\tau^{-1})=J(V)S,$ and so $J(V)S\supseteq
k_{\tau}J(V)S\supseteq J(V)S$ and thus $S=Sk_{\tau}$, as desired.
This shows that $O_l(J(A_f))=A_f$ and $A_f$ is
hereditary.\end{proof}

Not only can this criterion enable one to rapidly determine whether
or not the crossed-product order $A_f$ is hereditary, the utility of
the theorem above is now demonstrated by the ease with which the
following corollaries of it are obtained.

\begin{cor1} The crossed-product order $A_f$ is hereditary if and only if
$f(\tau,\gamma)\not\in M^2$ for all $\tau,\gamma\in G$ and every
maximal ideal $M$ of $S$. \end{cor1}

\begin{proof} This follows from the cocycle identity
$f^{\tau}(\tau^{-1},\tau\gamma)f(\tau,\gamma)=f(\tau,\tau^{-1})$.\end{proof}

In other words, the order $A_f$ is hereditary if and only if the values
of the two-cocycle $f$ are all square-free.

Since $A_f$ is a maximal order if and only if it is hereditary and
primary, by combining our result and results in \cite{H}, we
immediately have the following.

\begin{cor2} Given a crossed-product order $A_f$,
\begin{enumerate}\item it is a maximal order if and only if for every maximal ideal $M$ of $S$,
$f(\tau,\tau^{-1})\not\in M^2$ for all $\tau\in G$, and there exists
a set of right coset representatives $g_1,g_2,\ldots,g_r$ of $D_M$
in $G$ (i.e., $G$ is the disjoint union $\cup_iD_Mg_i$) such that
for all $i$, $f(g_i,g_i^{-1})\not\in M$.
\item if $S$ is a DVR, then it is a maximal order if and only if
$f(\tau,\tau^{-1})\not\in J(S)^2$ for all $\tau\in G$.
\end{enumerate}
\end{cor2}

\begin{proof} In either case, the primarity of $A_f$ is guaranteed
by \cite[Theorem 3.2]{H} (see also \cite[Proposition 2.1(b)]{H} when
$S$ is a DVR).\end{proof}

The Theorem above can readily be put to effective use with the
crossed-product orders in \cite[\S 4]{H}, for example. In that
section, all the crossed-product orders involved are primary orders,
and the two-cocycles are given in tabular form, with the values
factorized into primes of $S$. Using our criterion, it now becomes
a straightforward process to determine which of those orders are
maximal orders and which are not, by simply consulting, in each
case, the given table of values for the two-cocycle; the table whose
entries are all square-free represents a maximal order. This
determination can be made with little effort! In fact, if one knows
that the crossed-product order $A_f$ is a primary order, then
determining whether or not it is a maximal order could even be
easier, as the following result shows.

\begin{cor3} Suppose the crossed-product order $A_f$ is primary.
Then it is a maximal order if and only if there exists a maximal
ideal $M$ of $S$ such that $f(\tau,\tau^{-1})\not\in
M^2$ for all $\tau\in D_M$. \end{cor3}

\begin{proof} This follows from \cite[Corollary 3.11 and
Proposition 2.1(b)]{H}.\end{proof}

Let $L$ be an intermediate field of $F$ and $K$, let $G_L$ be the
Galois group of $K$ over $L$, let $U$ be a valuation ring of $L$
lying over $V$, and let $T$ be the integral closure of $U$ in $K$.
Then one can obtain a two-cocycle $f_{L,U}:G_L\times G_L\mapsto
T^{\#}$ from $f$ by restricting $f$ to $G_L\times G_L$ and
embedding $S^{\#}$ in $T^{\#}$. As before,
$A_{f_{L,U}}=\sum_{\tau\in G_L}Tx_{\tau}$ is a $U$-order in
$\Sigma_{f_{L,U}}=\sum_{\tau\in G_L}Kx_{\tau}=(K/L,G_L,f_{L,U})$.

\begin{cor4} Suppose the crossed-product order $A_f$ is
hereditary. Then $A_{f_{L,U}}$ is a hereditary order in
$\Sigma_{f_{L,U}}$ for each intermediate field $L$ of $F$ and $K$
and for every valuation ring $U$ of $L$ lying over $V$.
\end{cor4}

This leads to the following.

\begin{cor5} Suppose the crossed-product order $A_f$ is hereditary.
Then $A_{f_M}$ is a maximal order in $\Sigma_{f_M}$ for each maximal
ideal $M$ of $S$.
\end{cor5}

\begin{proof} The order $A_{f_M}$ is always primary, by \cite[Proposition 2.1(b)]{H}.\end{proof}

The following example illustrates two limitations of our theory,
however.

\begin{eg}
We give two crossed-product orders $A_{f_1}$ and $A_{f_2}$ with
$f_1\!\sim_K\!f_2$ and the graphs of $f_1$ and $f_2$ identical, but
$A_{f_1}$ is hereditary while $A_{f_2}$ is not. Also, we give an
example to demonstrate that the converse of Corollary 5 does not always hold.
\end{eg}

Let $F=\mathbb{Q}(x)$, and let $K=\mathbb{Q}(i)(x)$. Then the Galois
group $G=<\sigma>$ is a group of order two, where $\sigma$ is
induced by the complex conjugation on $\mathbb{Q}(i)$. If
$V=\mathbb{Q}[x]_{(x^2+1)}$, then $S$ has two maximal ideals, namely
$M_1=(x+i)S$ and $M_2=(x-i)S$, and $D_{M_1}=D_{M_2}=\{1\}$. Let
$f_1,f_2:G\times G\mapsto S^{\#}$ be two-cocycles defined by
$f_j(1,1)=f_j(1,\sigma)=f_j(\sigma,1)=1$ and
$f_1(\sigma,\sigma)=(x^2+1)x$, $f_2(\sigma,\sigma)=(x^2+1)^2x$.

Then $f_1\!\sim_K\!f_2$, and the subgroup of $G$ associated with
either cocycle is $H=\{1\}$, so that the graphs of $f_1$ and $f_2$
are identical. Clearly, $A_{f_1}$ is hereditary but $A_{f_2}$ is
not. We conclude that the property that a crossed-product order
$A_f$ is hereditary is not an intrinsic property of the graph of
$f$.

Also, if we set $f=f_2$, we see that $A_{f_M}=S_M$ for each maximal
ideal $M$ of $S$, and therefore $A_{f_M}$ is a maximal order in
$\Sigma_{f_M}=K$ for each maximal ideal $M$ of $S$, and yet $A_f$ is
not even hereditary (cf. \cite[Corollary 3.11]{H}, and \cite[Theorem
1]{Hd}). This is the case because $A_f$ is not primary, and also
because $f(G\times G)\not\subseteq U(S)$. $\Box$

 \bibliographystyle{amsplain}

\noindent
Department of Mathematics\\ Faculty of Science\\
Universiti Brunei Darussalam\\ Bandar Seri
Begawan BE1410\\ BRUNEI.
\\
\textit{Email address:} john.kauta@ubd.edu.bn

\end{document}